\newtheorem{theorem}{Theorem}[section]
\newtheorem{lemma}[theorem]{Lemma}
\newtheorem{proposition}[theorem]{Proposition}
\newtheorem{corollary}[theorem]{Corollary}
\theoremstyle{remark}
\newtheorem{remark}[theorem]{Remark}
\title{A class of fields with a restricted model completeness property}
\author{Philip Dittmann\thanks{philip.dittmann@tu-dresden.de}}
\affil{Institut für Algebra, TU Dresden}
\author{Dion Leijnse\thanks{d.leijnse@gmail.com}}
\affil{Radboud Universiteit Nijmegen}
\date{8 November 2019}
\begin{document}

\maketitle


\begin{abstract}
    We introduce and study a natural class of fields in which certain first-order definable sets are existentially definable, and characterise this class by a number of equivalent conditions.
    We show that global fields belong to this class, and in particular obtain a number of new existential (or diophantine) predicates over global fields.
\end{abstract}

\section{Introduction}

Model completeness is recognised as an important property of certain classes of well-behaved fields by model theorists and algebraic geometers.
Model completness can be defined as the property that every first-order definable set is existentially definable.
This is related to
statements in geometry about projections of varieties, such Chevalley's Theorem for algebraically closed fields (projections of constructible sets are constructible) or the Tarski--Seidenberg Theorem for real closed fields.

In practice, however, model completeness is not as common as one might wish. 
In particular, in research surrounding Hilbert's Tenth Problem one is frequently concerned with finding existential (or \emph{diophantine}) definitions for properties of interest in fields which are not model complete, such as global fields.
In this article, we introduce and study a new class of fields which includes all model complete fields, or more precisely includes all fields with model complete elementary diagram, and notably also includes all global fields, i.e.\ finite extensions of $\mathbb{Q}$ or $\mathbb{F}_p(T)$.

Our main theorem is the following characterisation result.
\begin{theorem}
  The following are equivalent for a field $K$.
  \begin{enumerate}
    \item[(1)] For every pair of elementary extensions $K^{**}, K^*$ of $K$ with $K^{**}\supseteq K^*$, the extension $K^{**}/K^*$ is a regular extension of fields.
    \item[(2)] For every 3-tuple $(m,d,r)$ there is an existential criterion for the ideal $(f_1, \dots, f_r) \subseteq K[X_1, \dots, X_m]$ generated by polynomials $f_i$ of total degree at most $d$ to be prime.
    \item[(3)] For every 3-tuple $(m,d,r)$ there is an existential criterion for the ideal $(f_1, \dots, f_r) \subseteq K[X_1, \dots, X_m]$ generated by polynomials $f_i$ of total degree at most $d$ to be maximal.
  
  \end{enumerate}
  These statements imply the following, and are equivalent to it if $K$ has finite degree of imperfection, i.e.\ if the extension $K^{1/p}/K$ is finite, where $p$ is the characteristic exponent of $K$.
  \begin{enumerate}
    \item[(4)] For every quantifier-free formula $\psi(x, \overline{y})$ in the language of rings where $x$ is a distinguished variable, the formula $\forall x \psi(x, \overline{y})$ is equivalent over $K$ to an existential formula with parameters in $K$.
  \end{enumerate}
\end{theorem}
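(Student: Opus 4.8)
The plan is to route statement (4) through the {\L}o\'{s}--Tarski preservation theorem and then read off the field-theoretic content of (1). I would work in the language of rings with a constant for each element of $K$ and set $T=\mathrm{Diag}_{\mathrm{el}}(K)$, the elementary diagram of $K$. The models of $T$ are precisely the elementary extensions of $K$, and an embedding of two such models is, up to identifying domain with image, exactly an inclusion $K^*\subseteq K^{**}$ of elementary extensions of $K$. By {\L}o\'{s}--Tarski relative to $T$, a formula with parameters in $K$ is equivalent over $K$ to an existential formula (with parameters in $K$) if and only if it is preserved under all such inclusions. Applying this to formulas of the shape $\forall x\,\psi(x,\overline y)$ shows that (4) is equivalent to the assertion that for every quantifier-free $\psi$, every pair $K^*\subseteq K^{**}$ of elementary extensions of $K$, and every tuple $\overline a$ from $K^*$, one has $K^*\models\forall x\,\psi(x,\overline a)\Rightarrow K^{**}\models\forall x\,\psi(x,\overline a)$. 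A finite field has no proper elementary extension, making all statements trivially true, so I may assume $K$ is infinite.

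For $(1)\Rightarrow(4)$ I would verify this preservation directly, arguing contrapositively: suppose some $b\in K^{**}$ satisfies the quantifier-free formula $\neg\psi(x,\overline a)$, and produce a witness inside $K^*$. If $b$ is algebraic over $K^*$, then regularity of $K^{**}/K^*$ — specifically the relative algebraic closedness of $K^*$ in $K^{**}$ — forces $b\in K^*$. If $b$ is transcendental over $K^*$, I use that a quantifier-free formula in the single variable $x$ with parameters in $K^*$ defines a finite or cofinite subset of any field, its finite part consisting of roots of nonzero polynomials over $K^*$ and hence of elements algebraic over $K^*$; a transcendental $b$ cannot lie in such a finite set, so the solution set of $\neg\psi(x,\overline a)$ is cofinite, and being defined by the same polynomials over the infinite field $K^*$ it meets $K^*$. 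Either way a witness lies in $K^*$, establishing (4). Note that only relative algebraic closedness, not the separability half of regularity, is used here.

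For the converse under finite degree of imperfection I would prove $(4)\Rightarrow(1)$, splitting regularity of each $K^{**}/K^*$ into its two halves. Relative algebraic closedness follows from (4) with no imperfection hypothesis: if some $b\in K^{**}\setminus K^*$ were algebraic over $K^*$ with minimal polynomial $f$ of degree $\ge 2$, then $\forall x\,(f(x)\ne 0)$ — a formula of the permitted shape with parameters the coefficients of $f$ — would hold in $K^*$ yet fail in $K^{**}$, contradicting the preservation form of (4). Separability is where the hypothesis enters: writing $p$ for the characteristic exponent, $[K^{1/p}:K]<\infty$ means $K$ has a finite $p$-basis $b_1,\dots,b_e$, and the statement that these fixed elements form a $p$-basis is first-order with parameters in $K$, hence passes to both $K^*$ and $K^{**}$. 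Thus $\{b_1,\dots,b_e\}$ is a $p$-basis of $K^*$ which remains $p$-independent in $K^{**}$ (being even a $p$-basis of $K^{**}$), and by Mac Lane's $p$-basis criterion this is exactly separability of $K^{**}/K^*$. Combining the two halves gives regularity, hence (1), and the equivalence of (1), (2), (3) established earlier completes the argument.

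The main obstacle is conceptual rather than computational: recognising that the single universally quantified variable in (4) corresponds, through {\L}o\'{s}--Tarski, precisely to the one-dimensional geometry governing relative algebraic closedness, and pinning down why the separability half of regularity is invisible to (4) in general but forced once the imperfection degree is finite. I expect that under infinite imperfection (4) may hold while some $K^{**}/K^*$ is inseparable, so that (1) is genuinely stronger; the finite-imperfection hypothesis is used only, but essentially, to make a $p$-basis first-order definable and therefore common to all elementary extensions.
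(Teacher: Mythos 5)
The portion of the theorem you actually prove --- that (1) implies (4), and that (4) implies (1) when the degree of imperfection is finite --- is correct and follows essentially the same route as the paper: {\L}o\'{s}--Tarski applied to the elementary diagram reduces (4) to preservation of $\forall x\,\psi$ under inclusions of elementary extensions; your finite-or-cofinite analysis of one-variable quantifier-free definable sets isolates relative algebraic closedness as the only thing needed for preservation (the paper does the same via conjunctive normal form and the formulas $\forall x(f=0\vee g_1\neq 0\vee\dotsb)$, which amounts to the same case split); and your first-order transfer of a finite $p$-basis is exactly Lemma \ref{lem:automatic_separability}, phrased there as $(K^*)^{1/p}=K^*K^{1/p}$.

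However, the proposal does not prove the theorem as stated: the equivalence of (1), (2) and (3) is never argued. You invoke it as ``established earlier,'' but nothing earlier in your proposal establishes it, and it is the substantive core of the statement. Closing this gap requires tools entirely absent from your argument: the effective bounds of van den Dries (Lemma \ref{IdealBounds}) to make primality of $(f_1,\dotsc,f_r)$ a first-order property of the coefficients in the first place; a tensor-product argument (for $I$ prime with fraction field $L$ of $K^*[X_1,\dotsc,X_m]/I$, regularity of $K^{**}/K^*$ makes $K^{**}\otimes_{K^*}L$ a domain, so primality is preserved upward and {\L}o\'{s}--Tarski yields the existential criterion in (2)); the compactness-strengthened Zariski lemma (Lemma \ref{StrongZariski}) together with quantifier elimination in algebraically closed fields to reduce maximality to primality plus a bounded-dimension condition, giving (2)$\Rightarrow$(3); and, for (3)$\Rightarrow$(1), the construction of a maximal ideal $\mathfrak{m}$ with $K^*[X_1,\dotsc,X_{p+q}]/\mathfrak{m}\cong L\subseteq\overline{K^*}$ whose maximality fails to persist to $K^{**}$ precisely when $K^{**}\otimes_{K^*}\overline{K^*}$ has zero divisors. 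None of this is reachable by the one-variable, finite-or-cofinite analysis that drives your treatment of (4), so the omission is a genuine gap rather than a routine detail.
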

Here by an existential criterion we mean an existential formula in the language of rings with parameters in $K$, where the free variables of the formula correspond to the coefficients of the polynomials $f_i$.

As results from \cite{Dittmann18} imply that global fields satisfy (1), the main theorem in particular implies that global fields satisfy (2), (3) and (4), so we gain new existential definitions in global fields (Corollary \ref{cor:global_fields}).

\paragraph{Acknowledgements} This article grew out of a research visit of the second author to the first author as part of the bachelor's programme of the Radboud Honours Academy.
The second author would like to thank Arne Smeets for being his thesis supervisor, and gratefully acknowledges travel support by the Radboud Honours Academy.
We would both like to thank to Arno Fehm for helpful comments on a preliminary version of this article.

\section{Proof of the main theorem}

We begin by collecting some lemmas regarding first-order definability of prime and maximal ideals in polynomial rings.
When speaking of first-order properties of a collection of polynomials $f_1, \dotsc, f_r$ of total degree at most $d$ in a polynomial ring $K[X_1, \dotsc, X_m]$ over a field, we fix some ordering of the monomials and associate to the $f_i$ their tuple of coefficients, so that a first-order property of the $f_i$ will just mean a first-order property of their tuple of coefficients.

\begin{lemma}[{\cite[Statements (I) and (II)]{Dries84}}]
\label{IdealBounds}
  Fix natural numbers $m, d, r$ and let $I = (f_1, \dots, f_r)$ be an ideal of $K[X_1, \dots, X_m]$, where all $f_i$ have total degree at most $d$.  Then:
  \begin{enumerate}
      \item[(1)] Let $f \in K[X_1, \dots, X_m]$ be of total degree at most $D$. There is a constant $A$ depending only on $m, d$ and $D$ (in particular not on $K$) such that $f \in I$ if and only if there exist $h_i \in K[X_1, \dots, X_m]$ of total degree at most $A$ with $f = h_1f_1 + \dots + h_rf_r$.
      \item[(2)] There is a constant $B$ depending only on $m$ and $d$ such that the ideal $I$ is prime if and only if $1 \notin I$ and the condition that $fg \in I$ implies $f \in I$ or $g \in I$ is satisfied for all $f,g \in K[X_1, \dots, X_m]$ of total degree at most $B$.
  \end{enumerate}
\end{lemma}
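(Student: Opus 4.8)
I would first record that the two ``if'' directions are immediate, so that in both parts the real content is uniformity in the field $K$. A representation $f = h_1 f_1 + \dots + h_r f_r$ with the $h_i$ of bounded degree certainly witnesses $f \in I$, and if $I$ is prime then ``$fg \in I \Rightarrow f \in I$ or $g \in I$'' holds for \emph{all} $f,g$, a fortiori for those of degree at most $B$. The key starting observation for part~(1) is that, once a degree bound $A$ on the $h_i$ is fixed, the equation $f = \sum_i h_i f_i$ becomes an inhomogeneous system of $K$-linear equations in the finitely many coefficients of the $h_i$, whose matrix entries are among the coefficients of the $f_i$ and whose right-hand side is built from the coefficients of $f$. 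This has two consequences I would exploit: solvability for a fixed $A$ is an existential condition on the coefficients, and solvability is invariant under field extension, since a linear system over $K$ that is solvable over some $L \supseteq K$ is already solvable over $K$.

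I would prove the uniform bound in part~(1) along the nonstandard lines of \cite{Dries84}. Suppose no bound $A(m,d,D)$ exists; then for each $n$ there are a field $K_n$ and polynomials $f^{(n)}, f_1^{(n)}, \dots, f_r^{(n)}$ of the prescribed degrees with $f^{(n)} \in (f_i^{(n)})$ but admitting no representation using $h_i$ of degree $\le n$. Form the ultraproduct field $K^* = \prod_n K_n / \mathcal{U}$. Because the relevant degrees are bounded, the coefficient tuples have ultralimits and assemble into honest polynomials $f^*, f_i^* \in K^*[X_1, \dots, X_m]$ of the same degrees. By the theorem of {\L}o\'{s} the membership of each $f^{(n)}$ yields a representation of $f^*$ with coefficients of some nonstandard degree, and the decisive step is to descend this to a representation of standard, finite degree over $K^*$. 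Granting that descent, $f^*$ has a representation of some finite degree $A_0$; by the linear-algebra reformulation and {\L}o\'{s} this forces degree-$\le A_0$ representations over $K_n$ for $\mathcal U$-almost all $n$, contradicting the choice $n > A_0$. One may alternatively invoke Hermann's explicit degree bounds, whose field-independence is visible from the fact that the elimination they rest on uses only field arithmetic.

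For part~(2) the plan is to exhibit, whenever $I$ fails to be prime while $1 \notin I$, a witnessing pair $f,g$ of degree bounded by a function of $m$ and $d$ alone. Using part~(1), the conditions ``$1 \in I$'', ``$f \in I$'' and ``$fg \in I$'' are themselves governed by uniform bounds, so it suffices to bound the degrees of witnesses, which I would do via two cases of effective commutative algebra, both with field-independent bounds. If $I$ is not radical there is $u \notin I$ with $u \in \sqrt{I}$; an effective Nullstellensatz bounds both $\deg u$ and the least exponent $k$ with $u^k \in I$ in terms of $m,d$, and a minimal choice of $k$ gives the witness $f = u^{k-1}$, $g = u$, with $fg = u^k \in I$ but $f,g \notin I$. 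If $I$ is radical but not prime, its minimal primes $P_1, \dots, P_s$ (with $s \ge 2$) satisfy bounds on $s$ and on their generating degrees by effective prime decomposition; choosing generators $a_i \in P_i \setminus P_1$ for $i \ge 2$ and $b \in P_1 \setminus P_2$, the products $f = a_2 \cdots a_s \notin I$ and $g = b \notin I$ satisfy $fg \in P_1 \cap \dots \cap P_s = I$ and have bounded degree. Letting $B(m,d)$ dominate all these degrees completes the reverse implication.

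The main obstacle in both parts is the same, namely showing the bounds are genuinely independent of $K$ and of the characteristic. The ultraproduct construction cleanly reduces each statement to an assertion about the honest polynomial ring $K^*[X_1,\dots,X_m]$ over a single field, but the witnesses produced by {\L}o\'{s}—whether membership witnesses $h_i$ or zero-divisor witnesses $f,g$—have \emph{nonstandard} degree. Converting them into standard-degree objects over $K^*$ is exactly the content of the (faithful) flatness of the nonstandard polynomial ring over $K^*[X_1,\dots,X_m]$, which lets extended ideals contract back correctly. This flatness is the technical heart of \cite{Dries84}, and it is the lemma I would secure first before assembling the rest of the argument.
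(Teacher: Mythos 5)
The paper itself gives no proof of this lemma: it is quoted directly from van den Dries's paper, so the only available comparison is with the argument of that reference. Your sketch of part (1) follows that reference's nonstandard method exactly: take an ultraproduct of putative counterexamples (over varying fields, which is what makes the bound field-independent), use \L{}o\'s to obtain a representation of nonstandard degree, descend to a representation of standard degree over $K^*$, and transfer back down via the observation that solvability of the linear system for a fixed degree bound $A_0$ is an existential condition on the coefficients. You correctly identify the descent step --- faithful flatness of the internal polynomial ring over $K^*[X_1,\dots,X_m]$ --- as the technical heart; since that is precisely the main lemma of the cited work, deferring it is legitimate, but your part (1) is then a reduction to that lemma rather than an independent argument.

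Part (2) is where you genuinely diverge from the source, and where there is a gap to watch. Van den Dries does not bound witnesses via decomposition; the argument there is that a prime ideal of $K^*[X_1,\dots,X_m]$ generates a prime ideal in the internal polynomial ring (a strengthening of the flatness statement), after which the same ultraproduct contradiction closes: an ultraproduct of non-prime ideals admitting no bounded-degree witnesses would be prime in the standard ring, hence prime internally, contradicting \L{}o\'s applied to the nonstandard-degree witnesses. Your alternative --- extracting bounded-degree witnesses $u^{k-1},u$ from an effective radical computation when $I$ is not radical, and $a_2\cdots a_s,\,b$ from bounded-degree generators of the minimal primes when $I$ is radical but not prime --- is algebraically sound as far as the witnesses themselves go, but it imports effective radical and prime decomposition with field-independent degree bounds as a black box. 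In the cited paper those decomposition bounds are derived \emph{after}, and using, the primality test, so within that framework your argument for (2) is circular. It can be repaired by sourcing the decomposition bounds independently (e.g.\ from Seidenberg's constructive commutative algebra), but as written the dependency needs to be made explicit.
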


These two results imply that we can write down a first-order formula for checking whether an ideal is prime.

\begin{lemma}
\label{primality-first-order}
  Given a 3-tuple $(m,d,r)$ there is a first-order formula $P_{m,d,r}(\overline{x})$ that is satisfied by a tuple $\overline{a}$ from a field $K$ if and only if the ideal $(f_1, \dots, f_r) \subseteq K[X_1,\dots, X_m]$ determined by the coefficients $\overline{a}$ is prime.
\end{lemma}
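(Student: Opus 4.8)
The plan is to translate the algebraic characterisation of primality furnished by Lemma \ref{IdealBounds} directly into the language of rings, exploiting crucially that the degree bounds $A$ and $B$ appearing there are independent of the field $K$. The first building block I would set up is a first-order (indeed existential) formula expressing ideal membership for a polynomial of bounded degree. Fix the data $(m,d,r)$ and a degree bound $D$, and let $A = A(D)$ be the constant from Lemma \ref{IdealBounds}(1), which is absolute once $D$ is fixed. A polynomial $h$ of total degree at most $D$, and each candidate cofactor $h_i$ of total degree at most $A$, has a fixed finite number of coefficients. The identity $h = h_1 f_1 + \dots + h_r f_r$ amounts, after expanding and comparing the coefficient of each monomial, to a finite system of polynomial equations that are bilinear in the coefficients of the $h_i$ and those of the $f_i$. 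Existentially quantifying over the finitely many coefficients of the $h_i$ thus yields an existential formula $M_D(\overline{x}, \overline{c})$, with $\overline{x}$ the coefficients of the $f_i$ and $\overline{c}$ the coefficients of $h$, which holds exactly when the corresponding $h$ lies in $I$.

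With this in hand I would assemble the primality formula by transcribing Lemma \ref{IdealBounds}(2). Let $B$ be the constant provided there. The condition $1 \notin I$ is the negation of $M_0(\overline{x}, \overline{c})$ with $\overline{c}$ specialised to the coefficient tuple of the constant polynomial $1$, hence first-order. For the main clause I would universally quantify over the coefficient tuples $\overline{c}_f, \overline{c}_g$ of two polynomials $f, g$ of total degree at most $B$; as $B$ depends only on $m$ and $d$, these are finitely many variables. The product $fg$ has total degree at most $2B$, and its coefficients are fixed polynomial expressions $\overline{c}_{fg}$ in $\overline{c}_f$ and $\overline{c}_g$, so $fg \in I$ is expressed by substituting these expressions into $M_{2B}$, while $f \in I$ and $g \in I$ are given by $M_B$. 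The full formula is then
\[
P_{m,d,r}(\overline{x}) \;:\equiv\; \neg M_0(\overline{x}, \overline{1}) \;\wedge\; \forall \overline{c}_f\, \forall \overline{c}_g\,\bigl[\, M_{2B}(\overline{x}, \overline{c}_{fg}) \to \bigl( M_B(\overline{x}, \overline{c}_f) \vee M_B(\overline{x}, \overline{c}_g) \bigr) \,\bigr],
\]
where $\overline{1}$ denotes the coefficient tuple of $1$. By Lemma \ref{IdealBounds} this holds for $\overline{a}$ precisely when the ideal determined by $\overline{a}$ is prime.

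The step requiring the most care is bookkeeping rather than genuine difficulty: one must verify that the composition of degree bounds never reintroduces a dependence on $K$ and never becomes circular. The crucial points are that $B$ is fixed once $(m,d)$ is fixed, so that the degree $2B$ of $fg$ is likewise fixed and the constant $A(2B)$ governing $M_{2B}$ is a genuine absolute constant; and that every quantifier over cofactor coefficients and over the test polynomials $f, g$ ranges over a number of variables determined solely by $(m,d,r)$. Granting the uniform bounds of Lemma \ref{IdealBounds}, the construction is a direct, if notationally heavy, transcription, and the resulting $P_{m,d,r}$ is one and the same formula for every field $K$.
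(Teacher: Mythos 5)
Your proof is correct and follows essentially the same route as the paper: both use Lemma \ref{IdealBounds}(1) to turn bounded-degree ideal membership into an existential formula on coefficients, and Lemma \ref{IdealBounds}(2) to reduce primality to a universally quantified implication over test polynomials of degree at most $B$. Your version merely spells out the bookkeeping (the $2B$ bound for $fg$ and the uniformity of the constants in $K$) that the paper leaves implicit.
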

\begin{proof}
  By the first part of the above lemma, we can check using a first-order formula that $1 \notin (f_1, \dots, f_r)$, since we need to check that for all $h_i$ of some bounded degree we have $h_1f_1 + \dots + h_rf_r \neq 1$. Using the second part of the lemma it suffices to quantify over all polynomials $f,g$ of degree at most $B$ in order to check that $I$ is prime. For checking the ideal membership we again use the first part of the lemma.
\end{proof}

There also exists a first-order formula for maximal ideals.
\begin{lemma}[{\cite[1.6]{Dries78}}\footnote{We would like to thank Matthias Aschenbrenner for pointing out this reference to us.}]
\label{maximality-first-order}
Given a 3-tuple $(m,d,r)$ there is a first-order formula $M_{m,d,r}(\overline{x})$ that is satisfied by a tuple $\overline{a}$ in a field $K$ if and only if the ideal $(f_1, \dots, f_r) \subseteq K[X_1,\dots, X_m]$ determined by the coefficients $\overline{a}$ is maximal.
\end{lemma}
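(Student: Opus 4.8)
The plan is to reduce maximality to the conjunction of two conditions, each of which I can make first-order with bounds depending only on $(m,d,r)$: primality, which is already available from Lemma~\ref{primality-first-order}, and finiteness of $A/I$ as a $K$-vector space, where $A = K[X_1,\dots,X_m]$. The algebraic input is the standard fact that for a prime ideal $I$ the quotient $A/I$ is an integral domain, and a finite-dimensional integral domain over a field is itself a field (multiplication by a nonzero element is an injective, hence bijective, $K$-linear map). Conversely, if $I$ is maximal then $A/I$ is a field that is finitely generated as a $K$-algebra, so by Zariski's lemma it is a finite algebraic extension of $K$, in particular finite-dimensional over $K$. Hence $I$ is maximal if and only if $I$ is prime and $\dim_K(A/I) < \infty$.

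To turn finite-dimensionality into a first-order condition, I would use the elementary criterion that $\dim_K(A/I) < \infty$ if and only if for each variable $X_j$ the ideal $I$ contains a nonzero univariate polynomial in $K[X_j]$. Indeed, if such polynomials $p_j$ exist with $\deg p_j = e_j$, then the residues of the monomials $X_1^{a_1}\cdots X_m^{a_m}$ with $a_j < e_j$ already span $A/I$; conversely, if $A/I$ is finite-dimensional the powers $1, X_j, X_j^2, \dots$ become linearly dependent modulo $I$, which is exactly a nonzero element of $I \cap K[X_j]$.

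The formula $M_{m,d,r}$ will then assert that the primality formula $P_{m,d,r}$ holds and that, for each $j$, there exist coefficients $c_0, \dots, c_E \in K$, not all zero, with $\sum_{i=0}^{E} c_i X_j^i \in I$; here membership in $I$ is expressed, exactly as in Lemma~\ref{primality-first-order}, through the bounded-degree cofactors furnished by Lemma~\ref{IdealBounds}(1). For correctness I need a bound $E = E(m,d,r)$, independent of $K$, with the property that whenever $I \cap K[X_j] \neq 0$ it already contains a nonzero element of degree at most $E$. The generator of the elimination ideal $I \cap K[X_j]$ supplies such an element, and its degree is bounded uniformly in the coefficients by the field-independent degree estimates of effective elimination theory (of the same flavour as those underlying Lemma~\ref{IdealBounds}).

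Establishing this last uniform bound is the main obstacle, and it is the only place where genuine effective commutative algebra is needed; everything else is bookkeeping with membership certificates already provided by Lemma~\ref{IdealBounds}. I expect the cleanest route is to invoke the uniform bounds on elimination ideals directly rather than to rederive them. As a check in both directions: if $I$ is maximal, then $A/I$ is a finite field extension of $K$ of degree bounded in terms of $(m,d,r)$ (a Bézout-type bound), so the minimal polynomial of each $X_j$ over $K$ has degree at most $E$ and all existential clauses are satisfied; if instead the primality clause together with all $m$ existential clauses hold, then $A/I$ is a finite-dimensional domain over $K$, hence a field, so $I$ is maximal.
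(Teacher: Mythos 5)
The paper does not actually prove this lemma: it is quoted from van den Dries \cite[1.6]{Dries78}, so there is no internal argument to compare against. Your reduction is nevertheless the natural one and its logical skeleton is sound: an ideal $I$ is maximal if and only if it is prime and $\dim_K K[X_1,\dotsc,X_m]/I<\infty$ (a finite-dimensional domain over $K$ is a field, and Zariski's Lemma gives the converse), and finite-dimensionality is equivalent to each elimination ideal $I\cap K[X_j]$ being nonzero. Given a uniform degree bound $E=E(m,d,r)$ for a nonzero element of each $I\cap K[X_j]$, the translation into a first-order formula via the membership certificates of Lemma~\ref{IdealBounds}(1) is routine, and both directions of your correctness check are fine. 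This is also essentially the decomposition the paper itself uses later when deducing statement (3) from statement (2) of the main theorem.

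The genuine gap is the bound $E$, and you should be aware that it is not a loose end but the entire content of the lemma. Two warnings. First, the quantitative form of Zariski's Lemma that you invoke (``$A/I$ is a finite extension of degree bounded in terms of $(m,d,r)$'') is exactly Lemma~\ref{StrongZariski} of the paper, which is \emph{derived from} Lemma~\ref{maximality-first-order} by a compactness argument; using it here would be circular. Second, the compactness/ultraproduct trick that produces such bounds elsewhere in the paper does not obviously apply here, because maximality is not yet known to be first-order, so it cannot be transferred to the ultraproduct to produce a contradiction. You therefore genuinely need an independent effective input: either Hermann--Seidenberg type degree bounds for elimination ideals (if $I\cap K[X_j]\neq 0$ then it contains a nonzero element of degree bounded in terms of $m$ and $d$ only), or a B\'ezout-type bound on the length of a zero-dimensional quotient. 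Such bounds do exist and are of the same provenance as Lemma~\ref{IdealBounds} (they are part of the Hermann/Seidenberg/van den Dries--Schmidt circle of results \cite{Dries84}), but they are not consequences of statements (I) and (II) quoted in Lemma~\ref{IdealBounds}, so your proof is incomplete until you either cite such a bound precisely or prove it. As written, the hard step has been named but not carried out.
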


By combining this fact with the compactness theorem, we can strengthen Zariski's Lemma. While Zariski's Lemma says that the quotient of a polynomial ring by a maximal ideal is a finite field extension, it is often stated and proved without any bounds.
\begin{lemma}
\label{StrongZariski}
Given a 3-tuple $(m,d,r)$ there is a natural number $N$ such that for every maximal ideal $I = (f_1, \dots, f_r)$ generated by polynomials of total degree at most $d$ the dimension $\dim_K K[X_1,\dots, X_m]/I$ is bounded by $N$.
\end{lemma}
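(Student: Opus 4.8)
The plan is to combine Zariski's Lemma with the compactness theorem, the key point being that for each fixed $N$ the property ``$\dim_K K[X_1,\dots,X_m]/I \le N$'' can be captured by a first-order formula in the coefficients of the generators. Write $L = K[X_1,\dots,X_m]/I$ and let $x_j$ denote the image of $X_j$. I would first build, for each $N$, a first-order formula $D_{\le N}(\overline{x})$ asserting the existence of polynomials $g_1,\dots,g_N$ of total degree at most $A := m(N-1)$ whose residues span $L$ over $K$. To make spanning first-order, note that the $K$-subspace $V = \mathrm{span}_K(g_1,\dots,g_N)$ of $L$ equals $L$ as soon as $1 \in V$ and $x_j \cdot g_i \in V$ for all $i$ and all $j$: in that case $V$ contains $1$ and is stable under multiplication by each $x_j$, hence contains every monomial $x^\alpha$, and these span $L$. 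Each of these conditions says that a prescribed polynomial of degree at most $A+1$ is congruent modulo $I$ to a $K$-linear combination of the $g_i$, which by Lemma~\ref{IdealBounds}(1) amounts to an ideal membership with cofactors of bounded degree and is therefore first-order. Quantifying existentially over the (finitely many) coefficients of the $g_i$ and over the scalars in these combinations yields $D_{\le N}$, and by construction $D_{\le N}(\overline{a})$ implies $\dim_K L \le N$.

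For the reverse implication I would use an elementary degree bound. If $I$ is maximal then $L$ is a finite field extension of $K$, so each $x_j$ is algebraic over $K$ with $[K(x_j):K] \le \dim_K L$; reducing powers accordingly shows that the monomials $x_1^{a_1}\cdots x_m^{a_m}$ with every $a_j < \dim_K L$ already span $L$. Hence if $\dim_K L \le N$ then $L$ is spanned by monomials of total degree at most $m(N-1) = A$, so a spanning family $g_1,\dots,g_N$ as above exists (padding with zeros if necessary). Consequently $D_{\le N}(\overline{a})$ holds exactly when the ideal determined by $\overline{a}$ is such that its quotient has dimension at most $N$.

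Finally I would run the compactness argument. Adjoining constants $\overline{c}$ for the coefficients, consider the theory $T$ given by the field axioms together with the maximality formula $M_{m,d,r}(\overline{c})$ from Lemma~\ref{maximality-first-order} and all the sentences $\neg D_{\le N}(\overline{c})$, $N \in \mathbb{N}$. A model of $T$ would be a field in which the ideal named by $\overline{c}$ is maximal yet has quotient of infinite dimension, contradicting Zariski's Lemma; so $T$ is inconsistent, and by compactness some finite subfamily already is. Since $D_{\le N}$ implies $D_{\le N'}$ for $N \le N'$, this reduces to the inconsistency of the field axioms with $M_{m,d,r}(\overline{c})$ and a single $\neg D_{\le N}(\overline{c})$, which says exactly that every maximal ideal of the prescribed shape, over any field, has quotient of dimension at most $N$. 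I expect the main obstacle to be the first step, and within it the uniform degree bound: one must extract a spanning family whose degrees depend only on $m$ and $N$ and not on $K$, and the monomial reduction above is what makes this work; the compactness wrap-up is then routine.
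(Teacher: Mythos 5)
Your proof is correct and follows essentially the same route as the paper's: encode the bound $\dim_K K[X_1,\dots,X_m]/I \le N$ by a first-order formula built from the uniform ideal-membership bounds of Lemma~\ref{IdealBounds}(1), then play the maximality formula of Lemma~\ref{maximality-first-order} off against Zariski's Lemma via compactness. The only difference is in the encoding of the dimension bound --- the paper says that any $N+1$ monomials of total degree at most $N$ are linearly dependent modulo $I$, whereas you assert the existence of a bounded-degree spanning set closed under multiplication by the variables --- and both versions rest on the same stabilization/degree-reduction argument.
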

The argument for this lemma is implicitly given in \cite[top of page 148]{Dries78}, but we give the full proof.
\begin{proof}
Let $n$ be a natural number. In order to check that the dimension of the quotient $K[X_1, \dots, X_m]/I$ is at most $n$, it suffices to check that every set of $n+1$ monomials of total degree at most $n$ is linearly dependent modulo $I$. There are only finitely many monomials of total degree at most $n$, and the degree of any linear combination of such monomials is bounded by $n$. Using the first part of Lemma \ref{IdealBounds} we can now write down a first-order formula $\varphi_n(\overline{x})$ expressing that $K[X_1, \dots, X_m]/I$ has dimension at most $n$ over $K$. 

Hence the set of formulae $\Phi = \{\neg \varphi_n : n \in \mathbb{N}\} \cup \{M_{m,d,r}\}$ is inconsistent by Zariski's Lemma, so by the compactness theorem there is a finite inconsistent subset $\Phi'\subseteq \Phi$. Let $N$ be the largest number such that $\neg \varphi_N \in \Phi'$, then we see that $\dim_K K[X_1,\dots, X_m]/I \leq N$. 
\end{proof}

In our proof of the main theorem we will repeatedly use the following version of the \L o\'s--Tarski preservation theorem.
\begin{theorem}
  \label{LosTarski}
  Let $K$ be a field and $\varphi(\overline{x})$ a formula in the language of rings with $n$ free variables. Then $\varphi$ is equivalent over $K$ to an existential formula with parameters if and only if for every pair of elementary extensions $K^{**}, K^*$ of $K$ with $K^* \subseteq K^{**}$ and $\overline{a} \in (K^*)^n$ the statement $K^* \models \varphi(\overline{a})$ implies $K^{**} \models \varphi(\overline{a})$ (that is, $\varphi$ is preserved under inclusions of elementary extensions).
\end{theorem}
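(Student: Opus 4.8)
The plan is to recognise this statement as the \L o\'s--Tarski preservation theorem applied to the elementary diagram of $K$. I would work in the language $L_K$ consisting of the language of rings together with a constant for each element of $K$, and set $T = \mathrm{Th}(K_K)$, the elementary diagram. The models of $T$ are, up to isomorphism fixing $K$, exactly the elementary extensions of $K$, and ``equivalent over $K$ to an existential formula with parameters'' unwinds to ``equivalent modulo $T$ to an existential $L_K$-formula.'' The crucial bookkeeping observation is that every model of $T$ is a field, so an $L_K$-substructure of a model of $T$ that is itself a model of $T$ is automatically a subfield (inverses computed in the smaller structure agree with those in the larger). Hence the pairs $K^* \subseteq K^{**}$ appearing in the hypothesis are precisely the pairs $M \subseteq N$ of models of $T$ in the model-theoretic sense, and the statement becomes the classical assertion that $\varphi$ is preserved under extensions among models of $T$ if and only if it is equivalent modulo $T$ to an existential formula.

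The forward direction is immediate: if $\varphi$ is equivalent over $K$ to an existential $\psi$, then for $M \subseteq N$ models of $T$ and $\overline{a} \in M^n$, any witness for $\psi(\overline{a})$ found in $M$ still lies in $N$ and the quantifier-free matrix is absolute for tuples from $M$, so $M \models \varphi(\overline a)$ gives $N \models \varphi(\overline a)$.

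For the converse I would introduce fresh constants $\overline c$ and let $\Gamma(\overline c)$ be the set of all existential $L_K$-formulas $\psi(\overline c)$ with $T \cup \{\varphi(\overline c)\} \models \psi(\overline c)$. The key claim is $T \cup \Gamma(\overline c) \models \varphi(\overline c)$: granting it, compactness produces finitely many $\psi_i \in \Gamma$ with $T \models \bigwedge_i \psi_i \to \varphi$, while $T \models \varphi \to \bigwedge_i \psi_i$ holds by definition of $\Gamma$, so the existential formula $\bigwedge_i \psi_i$ is equivalent to $\varphi$ over $K$. To prove the claim I would take $M \models T \cup \Gamma(\overline c)$, set $\overline a = \overline c^M$, and show the type $q(\overline c) = T \cup \{\varphi(\overline c)\} \cup \{\neg\psi(\overline c) : \psi \text{ existential},\ M \models \neg\psi(\overline a)\}$ is consistent; this follows by compactness from $M \models \Gamma$ together with the fact that a finite disjunction of existential formulas is again existential. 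A model $M_0 \models q$ is then a model of $T$ satisfying $\varphi$ whose existential type over $K$ at $\overline c^{M_0}$ is contained in that of $\overline a$ in $M$.

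The heart of the argument, and the step I expect to require the most care, is converting this type-containment into an honest embedding together with the verification that the field hypothesis applies. Since every existential $L_K$-formula over $\overline a$ true in $M_0$ holds in $M$, the quantifier-free diagram of $M_0$ is consistent with the elementary diagram of $M$, so $M_0$ embeds over $K$ into some elementary extension $M^* \succeq M$ with $\overline c^{M_0}$ mapping to $\overline a$. Here one must confirm that $K \subseteq M_0 \subseteq M^*$ and $M \preceq M^*$ are all fields and that the embeddings fix $K$, so that the preservation hypothesis genuinely applies to the field extension $M_0 \subseteq M^*$; applying it upgrades $M_0 \models \varphi$ to $M^* \models \varphi(\overline a)$, and then $M \preceq M^*$ with $\overline a \in M$ forces $M \models \varphi(\overline a)$, which establishes the claim.
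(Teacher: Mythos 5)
Your proposal is correct and follows essentially the same route as the paper: the paper's proof is simply a citation of the \L{}o\'s--Tarski preservation theorem (\cite[Corollary 5.4.5]{Hodges97}) applied to the elementary diagram of $K$, which is exactly the reduction you identify, and the compactness/type argument you then supply is the standard proof of that cited result. Your bookkeeping points (models of the elementary diagram are precisely the elementary extensions of $K$, and ring-language substructures that are models are automatically subfields over $K$) are the right ones to check.
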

\begin{proof}
  This is \cite[Corollary 5.4.5]{Hodges97} applied to the diagram of $K$, i.e.\ to the full elementary theory of $K$ in the language of rings expanded by a constant for each field element.
\end{proof}

Recall that an extension of fields $E/F$ is \emph{regular} if $E$ is linearly disjoint from the algebraic closure $\overline{F}$ over $F$, or equivalently if $F$ is relatively algebraically closed in $E$ and $E/F$ is separable (\cite[V.17, Proposition 9]{BourbakiAlg5}).
\begin{proof}[Proof of main theorem (equivalence of first three statements)]
We prove this by showing that (1) implies (2), (2) implies (3) and (3) implies (1).

First suppose that every pair of elementary extensions is regular. By Lemma \ref{primality-first-order} there is a first-order formula for checking that an ideal $(f_1,\dots,f_r)$ is prime.

Let $K^{**},K^*$ be two elementary extensions of $K$ with $K^{**}\supseteq K^*$. Let $I = (f_1, \dots, f_r) \subseteq K^*[X_1, \dots, X_m]$ be a prime ideal, then $K^*[X_1, \dots, X_n]/I$ is an integral domain. There is an isomorphism $$K^{**}[X_1,\dots, X_n]/(f_1,\dots, f_r) \cong K^{**} \otimes_{K^*} K^*[X_1, \dots, X_n]/I.$$ Let $L$ be the quotient field of $K^*[X_1,\dots, X_n]/I$, then $K^{**}\otimes_{K^*}L$ is an integral domain because $K^{**}/K^*$ is regular \cite[V.17, Proposition 9]{BourbakiAlg5}. There is an injection 
$$K^{**}\otimes_{K^*} K^*[X_1,\dots,X_n]/I \to K^{**} \otimes_{K^*}L,$$
so the ideal $(f_1,\dots, f_r)\subseteq K^{**}[X_1,\dots,X_n]$ is also prime. Therefore the property of being a prime ideal is preserved under inclusions of elementary extensions. The \L o\'s--Tarski theorem (Theorem \ref{LosTarski}) now implies that the given first-order formula for being a prime ideal is equivalent to an existential formula over $K$.

Next, assume that there is an existential definition for prime ideals, and fix a 3-tuple $(m,d,r)$. By Lemma \ref{StrongZariski} there is an $N$ such that every maximal ideal $I=(f_1, \dots, f_r)$ for this 3-tuple has $\dim_K K[X_1,\dots, X_m]/I \leq N$. Using Zariski's Lemma, we see that $I$ is maximal if and only if it is prime and $\dim_K K[X_1, \dots, X_m]/I\leq N$. It now suffices to existentially check that the dimension of the quotient is at most $N$.
For this, observe that $\dim_K K[X_1, \dotsc, X_m]/I$ is equal to the $\overline K$-dimension of  $\overline K \otimes_K K[X_1, \dotsc, X_m]/I \cong \overline{K}[X_1, \dotsc, X_m]/(f_1, \dotsc, f_r)$.
Over $\overline{K}$ there is a quantifier-free formula expressing that the dimension is at most $N$, because the theory of algebraically closed fields has quantifier elimination, and this formula also works over $K$. 

The last implication will be proven by contradiction. Suppose that there is some pair of elementary extensions such that $K^{**}/K^*$ is not regular. Then $K^{**} \otimes_{K^*}\overline{K^*}$ is not an integral domain, so there are nonzero $x,y \in K^{**}\otimes_{K^*}\overline{K^*}$ such that $xy = 0$. There are $x_1, \dots, x_p, y_1, \dots, y_q \in \overline{K^*}$ and $l_1, \dots, l_p, r_1, \dots, r_q \in K^{**}$ such that 
\begin{equation*}
    x = \sum_{i=1}^p l_i \otimes x_i \text{ and } y = \sum_{j=1}^q r_j \otimes y_j.
\end{equation*}
Then $L = K^*(x_1,\dots, x_p, y_1, \dots, y_q)$ is a finite field extension of $K^*$. Let $K^*[X_1, \dots, X_p, Y_1, \dots, Y_q]$ be the polynomial ring in $p+q$ variables and let $\pi: K^*[X_1, \dots, X_p, Y_1, \dots, Y_q] \to L$ be the canonical homomorphism. If we define $\mathfrak{m} = \ker(\pi)$, then $L \cong K^*[X_1, \dots, X_p, Y_1, \dots, Y_q]/\mathfrak{m}$ and $\mathfrak{m}$ is a maximal ideal. However, $\mathfrak{m}$ does not remain maximal when passing to $K^{**}$ since $K^{**} \otimes_{K^*} L$ is not an integral domain. The \L o\'s--Tarski theorem then gives that for the 3-tuples $(m,d,r)$ for which there exist generators for $\mathfrak{m}$ there is no existential formula expressing that the ideal is maximal.
\end{proof}

We are left with the connection between conditions (1) and (4) of the main theorem.
We first state a lemma on elementary extensions of fields with finite degree of imperfection.
Recall here that the degree of imperfection of the field $K$ is $[K : K^p]$ if $K$ has characteristic $p>0$; if $K$ has characteristic zero, we simply define the degree of imperfection to be $1$.


\begin{lemma}\label{lem:automatic_separability}
  Let $K$ be of finite degree of imperfection. Then for any pair of elementary extensions $K^{**}$ and $K^*$ of $K$ with $K^* \subseteq K^{**}$, the extension $K^{**}/K^*$ is separable.
\end{lemma}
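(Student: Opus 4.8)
The plan is to exploit that finite degree of imperfection makes the notion of a $p$-basis first-order expressible, so that a $p$-basis of $K$ survives into every elementary extension, and then to feed this into the linear-disjointness criterion for separability. We may assume $\operatorname{char} K = p > 0$, since in characteristic zero every field extension is separable. Write $p^e = [K:K^p]$ and fix a $p$-basis $b_1, \dotsc, b_e$ of $K$, so that the $p^e$ monomials $m_I = b_1^{i_1}\cdots b_e^{i_e}$ with $0 \le i_j < p$ form a basis of $K$ over $K^p$.

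First I would observe that the property of $b_1,\dotsc,b_e$ being a $p$-basis is captured by first-order formulas with parameters $b_1,\dotsc,b_e$. Indeed, $p$-independence of the $m_I$ is the universal statement that $\sum_I c_I^p m_I = 0$ forces all $c_I = 0$, and the spanning property is the $\forall\exists$ statement that every element can be written as $\sum_I c_I^p m_I$; both are genuine first-order sentences since the $m_I$ are fixed terms in the parameters $b_i$. Because $K^*$ and $K^{**}$ are elementary extensions of $K$ and the $b_i$ lie in $K$, both sentences transfer, so $b_1,\dotsc,b_e$ is simultaneously a $p$-basis of $K^*$ and of $K^{**}$; in particular the $m_I$ form a $(K^*)^p$-basis of $K^*$ and remain linearly independent over $(K^{**})^p$.

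Next I would invoke the standard criterion that $K^{**}/K^*$ is separable if and only if $K^*$ and $(K^{**})^p$ are linearly disjoint over $(K^*)^p$, equivalently that $(K^*)^{1/p}$ and $K^{**}$ are linearly disjoint over $K^*$ (see \cite[V.15]{BourbakiAlg5}). To check this linear disjointness it suffices to exhibit one $(K^*)^p$-basis of $K^*$ that stays linearly independent over $(K^{**})^p$. The family $\{m_I\}$ is such a basis by the previous paragraph, and it is linearly independent over $(K^{**})^p$ precisely because the $b_i$ are $p$-independent in $K^{**}$, again by the previous paragraph. This gives the required linear disjointness and hence the separability of $K^{**}/K^*$.

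The delicate point is the first step: one must be certain that having degree of imperfection exactly $p^e$, witnessed by a concrete $p$-basis, is genuinely expressible by first-order sentences, so that elementarity transfers it to both $K^*$ and $K^{**}$ with the \emph{same} witnesses $b_i \in K$. Finiteness of the degree of imperfection is exactly what makes this possible, since it bounds the number of monomials $m_I$ and hence keeps the relevant formulas finitary; without it the spanning condition would not be first-order. Once the shared $p$-basis is in place, the separability criterion applies essentially formally.
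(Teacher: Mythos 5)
Your proof is correct and takes essentially the same route as the paper's: in both, the key step is that finiteness of $[K:K^p]$ makes the $p$-basis of $K$ first-order expressible with parameters in $K$, so by elementarity it remains a $p$-basis of $K^*$ and of $K^{**}$, and separability then follows from MacLane's linear-disjointness criterion. The paper merely packages your basis computation in tensor-product form, observing that $K^{**}\otimes_{K^*}(K^*)^{1/p}=K^{**}\otimes_K K^{1/p}=(K^{**})^{1/p}$ is a field.
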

In the next section we will see that the condition that $K$ has finite degree of imperfection is necessary.
\begin{proof}
  There is nothing to show in characteristic zero, so let $p = \operatorname{char}(K) > 0$. The extension $K^{1/p}/K$ is finite by assumption, hence elementary transfer implies $(K^{*})^{1/p} = K^* K^{1/p}$ and $(K^{**})^{1/p} = K^{**} K^{1/p}$.
  Therefore $K^{**} \otimes_{K^*} (K^*)^{1/p} = K^{**} \otimes_K K^{1/p} = (K^{**})^{1/p}$ is a field, hence $K^{**}$ is linearly disjoint from $(K^*)^{1/p}$ over $K^*$, which proves that $K^{**}/K^*$ is separable by \cite[Chapter VIII, Proposition 4.1]{LangAlgebra}.
\end{proof}

The following proposition now finishes the proof of the main theorem.
\begin{proposition}
  The field $K^*$ is relatively algebraically closed in $K^{**}$ for all elementary extensions $K^{**}\supseteq K^*$ of $K$ if and only if $K$ has the property that every formula of the form $\forall x \psi(x, \overline y)$ with $\psi$ quantifier-free is equivalent over $K$ to an existential formula with parameters.
\end{proposition}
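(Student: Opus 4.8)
The plan is to use the \L o\'s--Tarski preservation theorem (Theorem \ref{LosTarski}) to convert condition (4) into a preservation statement, and then to read off preservation of formulas $\forall x\,\psi(x,\overline y)$ directly from the field-theoretic behaviour of elements of $K^{**}$ over $K^*$. By Theorem \ref{LosTarski}, a formula $\forall x\,\psi(x,\overline y)$ is equivalent over $K$ to an existential formula with parameters exactly when it is preserved under inclusions of elementary extensions. Hence the right-hand condition is equivalent to: for every quantifier-free $\psi(x,\overline y)$, every pair $K^*\subseteq K^{**}$ of elementary extensions of $K$, and every $\overline a\in K^*$, the implication ``$K^*\models\forall x\,\psi(x,\overline a)$ implies $K^{**}\models\forall x\,\psi(x,\overline a)$'' holds. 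The finite-field case is degenerate, since the only elementary extension of a finite field is itself, so I would assume $K$, and therefore every $K^*$, infinite.

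For the forward implication I would assume $K^*$ is relatively algebraically closed in $K^{**}$ and verify this preservation property. Fix $\overline a\in K^*$ with $K^*\models\forall x\,\psi(x,\overline a)$ and take any $b\in K^{**}$; the goal is $K^{**}\models\psi(b,\overline a)$. Since $\psi$ is quantifier-free, after substituting $\overline a$ the formula $\psi(x,\overline a)$ is a Boolean combination of conditions $q_i(x)=0$ for finitely many $q_i\in K^*[x]$, so the truth value of $\psi(b,\overline a)$ depends only on which $q_i$ vanish at $b$. If $b$ is algebraic over $K^*$, relative algebraic closedness forces $b\in K^*$, and as quantifier-free formulas are preserved along the inclusion $K^*\subseteq K^{**}$ we get $K^{**}\models\psi(b,\overline a)$ from the hypothesis. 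If $b$ is transcendental over $K^*$, then $q_i(b)=0$ holds precisely for the zero polynomials among the $q_i$; because $K^*$ is infinite and the nonzero $q_i$ have only finitely many roots in $K^*$, there is a $c\in K^*$ at which exactly the zero polynomials vanish, so $\psi(c,\overline a)$ and $\psi(b,\overline a)$ have the same truth value, and the former holds by hypothesis. In both cases $K^{**}\models\psi(b,\overline a)$, establishing preservation and hence (4).

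For the converse I would argue by contraposition, producing an explicit non-preserved formula. Suppose $K^*$ is not relatively algebraically closed in $K^{**}$, so there is $b\in K^{**}\setminus K^*$ algebraic over $K^*$ with minimal polynomial $\mu=T^n+a_{n-1}T^{n-1}+\dots+a_0\in K^*[T]$ of degree $n\geq 2$. I would take $\psi(x,\overline y)$ to be the quantifier-free formula $x^n+y_{n-1}x^{n-1}+\dots+y_0\neq 0$ and set $\overline a=(a_0,\dots,a_{n-1})$. Then $\forall x\,\psi(x,\overline a)$ asserts that $\mu$ has no root: it holds in $K^*$, since $\mu$ is irreducible of degree $\geq 2$ and so has no root there, but it fails in $K^{**}$ because $\mu(b)=0$. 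Thus $\forall x\,\psi$ is not preserved under inclusions of elementary extensions, so by Theorem \ref{LosTarski} it is not equivalent over $K$ to an existential formula, contradicting (4).

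I expect the transcendental case of the forward direction to be the main obstacle: one must show that the ``generic'' truth value of $\psi(x,\overline a)$ is already witnessed inside $K^*$. This is exactly where the infinitude of $K^*$ and the finiteness of the root sets of the $q_i$ are used, letting one replace the transcendental $b\in K^{**}$ by a suitable $c\in K^*$ realising the identical pattern of vanishing among the $q_i$.
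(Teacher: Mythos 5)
Your proof is correct, and one half of it coincides with the paper's: to get relative algebraic closedness from the formula condition, both you and the paper apply \L{}o\'s--Tarski to the formula $\forall x\,(x^n + y_{n-1}x^{n-1} + \dotsb + y_0 \neq 0)$ instantiated at the coefficients of an irreducible polynomial of degree $\geq 2$. In the other direction your route is genuinely different in structure. The paper first normalizes the formula syntactically: it puts $\psi$ into conjunctive normal form, collapses disjunctions of equations via $t=0 \vee s=0 \leadsto ts=0$, and so reduces to formulas $\forall x\,(f=0 \vee g_1\neq 0 \vee \dotsb \vee g_n\neq 0)$, then case-splits on whether some $g_i(X,\overline b)$ is a nonzero polynomial. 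You instead keep $\psi$ as an arbitrary Boolean combination of conditions $q_i(x)=0$ and case-split on the \emph{witness} $b \in K^{**}$: if $b$ is algebraic over $K^*$ it lies in $K^*$ by hypothesis, and if $b$ is transcendental you find $c \in K^*$ realising the same vanishing pattern (only the zero polynomials vanish), using that $K^*$ is infinite and nonzero polynomials have finitely many roots. Both arguments rest on the same two facts (relative algebraic closedness plus finiteness of root sets over an infinite field), but yours buys a cleaner argument by avoiding the normal-form reduction, at the mild cost of having to observe that the truth value of a quantifier-free formula is determined by the vanishing pattern of its atomic polynomials; the paper's normalization makes the preservation check more mechanical. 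You also handle the degenerate finite-field case correctly (finite fields have no proper elementary extensions), where the paper instead notes the case inline.
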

\begin{proof}
  Assume first that $K$ satisfies the condition on formulae. For every $n > 1$ consider the formula $\varphi_n(y_0, \dotsc, y_{n-1}) = \forall x (x^n + y_{n-1}x^{n-1} + \dotsb + y_0 \neq 0)$. By assumption, this formula is equivalent over $K^*$ and $K^{**}$ to an existential formula with parameters in $K$.
  In particular, for every tuple $\overline{b}$ in $K^*$ with $K^* \models \varphi_n(\overline b)$ we also have $K^{**} \models \varphi_n(\overline b)$. This proves that every polynomial with coefficients in $K^*$ which has a zero in $K^{**}$ already has a zero in $K^*$, i.e.\ that $K^*$ is relatively algebraically closed in $K^{**}$, as desired.

  Assume conversely that $K$ satisfies the condition of relatively algebraically closed extensions, and let $\varphi(\overline y)$ be a formula of the given form. We may restrict to formulae of the form \[\forall x (f(x, \overline{y}) = 0 \vee g_1(x, \overline{y}) \neq 0 \vee \dotsb \vee g_n(x, \overline{y}) \neq 0) \] where $f$ and the $g_i$ are polynomials with coefficients in $K$, by using conjunctive normal form and replacing disjunctions $t = 0 \vee s = 0$ by $ts = 0$.
  By the \L{}oś--Tarski Theorem, it suffices to show that for two elementary extensions $K^{**}/K^*$ of $K$ and a tuple $\overline b$ in $K^*$, it always holds that $K^* \models \varphi(\overline b)$ implies $K^{**} \models \varphi(\overline b)$.
  Assume first that at least one of the polynomials $g_i(X, \overline{b}) \in K^*[X]$ is non-zero.
  By assumption, $K^*$ is relatively algebraically closed in $K^{**}$, i.e.\ the finitely many zeroes of the non-zero polynomial $g_i(X, \overline{b})$ in $K^{**}$ are already in $K^{*}$.
  Hence $K^{*} \models \varphi(\overline{b})$ implies $K^{**} \models \varphi(\overline{b})$ in this case.

  Otherwise all $g_i(X, \overline{b})$ are the zero polynomial, and the only way for $K^* \models \varphi(\overline{b})$ is for $f(X, \overline{b})$ to also be the zero polynomial (unless $K^*$ is finite, but then there is nothing to prove). Hence $K^{**} \models \varphi(\overline{b})$ is trivially true as well.
\end{proof}

\begin{remark}
  The statements of the main theorem in some ways complement results obtained in \cite[Remark 5.3.6]{PhDThesisDittmann}. There it is observed that $K$ having an existential criterion for a polynomial to be irreducible, a special case of point (2) of the main theorem, is equivalent to the property that all pairs of elementary extensions $K^*\subseteq K^{**}$ have that $K^*$ is relatively algebraically closed in $K^{**}$ (see also \cite[Corollary 5.4]{Dittmann18}). There are also some equivalent conditions of a geometric kind, see \cite[Theorem 5.4.1]{PhDThesisDittmann}.
\end{remark}

\section{Examples, counterexamples and an open problem}

The most basic examples of fields satysfying the conditions from the main theorem are those with a model complete diagram. Any elementary extension of fields is regular (since irreducible polynomials over the ground field remain irreducible in the extension, and $p$-independent elements remain $p$-independent), so these fields satisfy the first condition from the main theorem.

This includes in particular the algebraically closed fields, real closed fields and $p$-adically closed fields, as these are model complete even in the original language of rings.
Likewise, this also includes every separably closed field of finite imperfection degree (see \cite[Lemma 3.2]{messmerSeparablyClosedFields}), as well as certain henselian fields such as $\mathbb{C}((T))$ (since here the valuation ring is both existentially and universally definable with parameters, both value group and residue field have model complete diagram, and an Ax--Kochen/Ershov principle holds).

A wider class of easy examples satisfying the conditions of the main theorem -- which in fact includes all the examples mentioned above -- is given by the fields with finite degree of imperfection and only finitely many Galois extensions of any given degree, equivalently the fields $K$ such that for every $n$ all extensions of $K$ of degree $n$ (in a fixed algebraic closure) are contained in some finite extension of $K$. (One may want to call such fields \emph{bounded}; unfortunately, there are already several definitions of such a notion in the literature, with different meanings for imperfect fields.)

The proof of this statement is similar to that of Lemma \ref{lem:automatic_separability}.
\begin{proposition}
  Let $K$ be a field such that for every $n$ all extensions of $K$ of degree $n$ in a fixed algebraic closure are contained in some finite extension of $K$.
  Then every pair of elementary extensions with $K^{**}\supseteq K^*$ is regular.
\end{proposition}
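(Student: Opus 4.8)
The plan is to verify the two defining properties of a regular extension separately: separability of $K^{**}/K^*$, which should reduce to the previous lemma, and relative algebraic closedness of $K^*$ in $K^{**}$, which carries the real content. For separability I would first argue that the hypothesis forces $K$ to have finite degree of imperfection: if $[K:K^p]$ were infinite in characteristic $p$, one could pick $p$-independent $a_1,a_2,\dotsc$, and the degree-$p$ purely inseparable extensions $K(a_k^{1/p})$ could not all lie in a single finite extension, since their composita $K(a_1^{1/p},\dotsc,a_k^{1/p})$ have unbounded degree $p^k$. With finite degree of imperfection in hand, Lemma~\ref{lem:automatic_separability} gives that $K^{**}/K^*$ is separable, so every element of $K^{**}$ algebraic over $K^*$ is separable over $K^*$. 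It then remains to rule out the existence of some $\alpha \in K^{**}\setminus K^*$ that is separable algebraic over $K^*$, say of degree $n$.

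The main device, mirroring the use of a fixed finite extension of $K$ in Lemma~\ref{lem:automatic_separability}, is to produce a single finite Galois extension $G/K$ in which enough polynomials split, and then to transfer this extension to $K^*$ and $K^{**}$ by elementary equivalence. Using the hypothesis, I would take for each $d\le n$ a finite extension of $K$ containing all degree-$d$ extensions, form their compositum, pass to the separable closure of $K$ inside it, and take the normal closure; this yields a finite Galois $G/K$, of some degree $M$, in which every monic separable polynomial over $K$ of degree at most $n$ splits completely. Let $h\in K[T]$ be the minimal polynomial of a primitive element of $G/K$. The key point is that the statement ``$h$ is separable and irreducible, and every monic separable polynomial of degree at most $n$ splits completely in $K[T]/(h)$'' is a first-order sentence $\sigma_n$ with parameters in $K$ (the coefficients of $h$): irreducibility, separability, and complete splitting of a bounded-degree polynomial in the finite-dimensional algebra $K[T]/(h)$ are all first-order for fixed degrees. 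Since $K \preceq K^*$ and $K \preceq K^{**}$, the sentence $\sigma_n$ holds in $K^*$ and in $K^{**}$ with the same parameters.

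For the conclusion I would exploit that, by $\sigma_n$, the polynomial $h$ remains irreducible over both $K^*$ and $K^{**}$, so $G^* := K^*[T]/(h)$ and $G^{**} := K^{**}[T]/(h)$ are fields, each of degree $M$, with a natural inclusion $G^* \subseteq G^{**}$. Comparing dimensions shows the multiplication map $K^{**}\otimes_{K^*} G^* \to G^{**}$ is an isomorphism, so this tensor product is a field; hence $K^{**}$ and $G^*$ are linearly disjoint over $K^*$, which gives $K^{**}\cap G^* = K^*$. Now let $p^*\in K^*[T]$ be the minimal polynomial of $\alpha$ over $K^*$, monic, irreducible, separable, of degree $n$. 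By $\sigma_n$ applied over $K^*$, $p^*$ splits completely in $G^*$, so all of its roots in the larger field $G^{**}$ already lie in $G^*$. But $\alpha \in K^{**}\subseteq G^{**}$ is a root of $p^*$, whence $\alpha \in K^{**}\cap G^* = K^*$, contradicting $\alpha\notin K^*$.

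I expect the main obstacle to be the transfer step: one must check carefully that the relevant properties (irreducibility and separability of $h$, and complete splitting of every monic separable polynomial of bounded degree in $K[T]/(h)$) are genuinely expressible by a single first-order sentence with parameters drawn from $K$, so that elementary equivalence applies verbatim with \emph{the same} parameters in $K^*$ and $K^{**}$. The accompanying algebraic inputs — that $K^{**}\otimes_{K^*}G^*$ being a field yields $K^{**}\cap G^* = K^*$, and the construction of $G$ — are standard and I would treat them as routine.
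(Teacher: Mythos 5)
Your argument is correct, and it shares with the paper's proof the central device of encoding a fixed finite extension of $K$ as a finite-dimensional $K$-algebra and transferring a first-order statement about it to $K^*$ and $K^{**}$; but the way regularity is then deduced is genuinely different. The paper transfers the statement that \emph{every} extension of degree $n$ --- separable or not --- embeds into the fixed extension $K_n$, concludes $\overline{K^*} = K^* \otimes_K \overline{K}$, and then obtains regularity of $K^{**}/K^*$ in one line from $K^{**}\otimes_{K^*}\overline{K^*} \cong K^{**}\otimes_K\overline{K}$ together with regularity of the elementary extension $K^{**}/K$; no case split between separability and relative algebraic closedness is needed. You instead treat the two halves of regularity separately: separability via the (correct) observation that the hypothesis forces finite degree of imperfection --- which the paper asserts as part of its ``equivalently'' reformulation but does not need in the proof --- so that Lemma \ref{lem:automatic_separability} applies, and relative algebraic closedness via a transferred splitting-field sentence, linear disjointness of $K^{**}$ and $G^* = K^*[T]/(h)$ over $K^*$, and an element-chasing contradiction. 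Your route is longer but more explicit, working at the level of individual polynomials and avoiding tensor products with $\overline{K}$; the paper's route is shorter and uniform precisely because the transferred sentence also captures inseparable extensions. All the steps you flag as needing care (first-order expressibility of $\sigma_n$, the dimension count showing $K^{**}\otimes_{K^*}G^*\cong G^{**}$, and $K^{**}\cap G^* = K^*$ from linear disjointness) do go through as you describe.
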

\begin{proof}
  For given $n>0$, let $K_n/K$ be a finite extension into which any extension of $K$ of degree $n$ embeds.
  Choosing a $K$-basis of $K_n$, the field $K_n$ is isomorphic to a $K$-vector space $K^m$ with a bilinear multiplication function $K^m \times K^m \to K^m$.
  In this way, the fact that every field extension of $K$ of degree $n$ embeds into the extension $K_n$ is expressible by a first-order sentence with parameters in $K$. By first-order transfer, any extension of $K^\ast$ of degree $n$ then embeds into the extension field $K^\ast K_n$.
  This implies that $\overline{K^*} = K^* \otimes_K \overline K$. Therefore regularity of $K^{**}/K$ implies that $K^{**} \otimes_{K^*} \overline{K^*} \cong K^{**} \otimes_K \overline{K}$ is a field, so $K^{**}/K^*$ is regular.
\end{proof}

More complicated examples for fields satisfying the conditions of the main theorem -- and in fact the motivating examples for this article -- are given by global fields, for which the first condition follows from \cite[Corollary 5.3]{Dittmann18} (relative algebraic closedness of $K^*$ in $K^{**}$) together with Lemma \ref{lem:automatic_separability}.
The main theorem now yields the following.
\begin{corollary}\label{cor:global_fields}
  Over global fields, primality and maximality of polynomial ideals can be expressed by existential formulae, and formulae with one existential quantifier are equivalent to universal formulae (all with parameters).
\end{corollary}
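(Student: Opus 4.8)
The plan is to show that every global field $K$ satisfies condition (1) of the main theorem, and then to read off the three assertions of the corollary from conditions (2), (3) and (4). To establish (1) I must check that for every pair of elementary extensions $K^{**} \supseteq K^*$ of $K$ the extension $K^{**}/K^*$ is regular. Using the characterisation of regularity recalled just before the proof of the main theorem, it suffices to verify the two constituent properties separately: that $K^*$ is relatively algebraically closed in $K^{**}$, and that $K^{**}/K^*$ is separable. For relative algebraic closedness I would invoke the external input \cite[Corollary 5.3]{Dittmann18}, which asserts precisely this for elementary extensions of global fields. For separability I would appeal to Lemma \ref{lem:automatic_separability}, which applies once one observes that global fields have finite degree of imperfection: number fields have characteristic zero (imperfection degree $1$ by convention), while a global function field is a finite extension of some $\mathbb{F}_p(T)$ and so satisfies $[K:K^p]=p<\infty$. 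Conjoining the two properties gives regularity of $K^{**}/K^*$, so condition (1) holds.

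Next, because global fields have finite degree of imperfection, the main theorem asserts that conditions (1) through (4) are all equivalent; hence (2), (3) and (4) hold for $K$. Condition (2) is exactly the existence of an existential criterion for primality of a polynomial ideal, and condition (3) the analogous criterion for maximality, which yields the first two assertions of the corollary immediately. For the final assertion I would dualise condition (4): that condition says every formula $\forall x\,\psi(x,\overline y)$ with $\psi$ quantifier-free is equivalent over $K$ to an existential formula with parameters. Negating both sides, $\exists x\,\neg\psi(x,\overline y)$ is equivalent to the negation of an existential formula, i.e.\ to a universal formula with parameters; since $\neg\psi$ ranges over all quantifier-free formulae as $\psi$ does, this states exactly that every one-quantifier existential formula is equivalent over $K$ to a universal formula with parameters.

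I expect the only genuinely substantive step to be the relative-algebraic-closedness input \cite[Corollary 5.3]{Dittmann18}: this encodes arithmetic information specific to global fields and is imported from outside the present article, so it is the real engine of the argument. The remaining steps---confirming the finite imperfection degree, assembling regularity from its two halves, transferring across the (already proven) main theorem, and passing to negations in condition (4)---are formal and should present no difficulty.
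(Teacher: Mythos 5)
Your proposal is correct and follows essentially the same route as the paper: condition (1) is verified for global fields by combining relative algebraic closedness from \cite[Corollary 5.3]{Dittmann18} with separability from Lemma \ref{lem:automatic_separability} (using finite imperfection degree), and the three assertions are then read off from conditions (2), (3) and (4) of the main theorem, with the last obtained by negating (4). The only difference is that you spell out the dualisation of (4) and the finiteness of the imperfection degree explicitly, which the paper leaves implicit.
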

Note that global fields are far from being model complete, as following Rumely \cite{RumelyUndecidabilityGlobalFields} the ring $\mathbb{Z}$ is interpretable in every global field (in fact every global field is bi-interpretable with $\mathbb{Z}$), and hence every global field has many definable subsets (for instance the complement of the halting set in a suitably nice copy of $\mathbb{Z}$) which are not computably enumerable and therefore not existentially definable.

For a counterexample to the conditions of the main theorem we consider the following (see \cite[Example 5.3.7]{PhDThesisDittmann}).
Let $K = K_0(t_1, t_2, \dotsc)$ be a field obtained by adjoining countably many transcendental elements to an arbitrary base field $K_0$, and fix $n > 1$. For every $m$ write $L_m = K(\sqrt[n]{t_m})$. Observe that $L_m$ is isomorphic to $K$ over $K_0(t_1, \dotsc, t_{m-1})$.
Let $\mathfrak{U}$ be a non-principal ultrafilter on $\mathbb{N}$ and let $K_\infty = \prod_m K / \mathfrak{U}$ and $L_\infty = \prod_m L_m / \mathfrak{U}$ be ultraproducts with respect to $\mathfrak{U}$. Then $K_\infty$ and $L_\infty$ are isomorphic over $K$,
hence in particular both $L_\infty$ and $K_\infty$ are elementary extensions of $K$, but $L_\infty/K_\infty$ is an extension of degree $n$ under the natural embedding and hence not regular. Thus the first condition of the main theorem is violated.


The results of this paper leave an open question. It is of course true that every field $K$ with the condition that every pair of elementary extensions $K^{**} \supseteq K^*$ is a regular extension also has the property that in every such pair of extensions $K^\ast$ is relatively algebraically closed in $K^{\ast\ast}$. In fields with finite degree of imperfection, the converse is also true (Lemma \ref{lem:automatic_separability}).
It would be interesting to know whether this can fail if the degree of imperfection is infinite.

There are fields where not every pair of elementary extensions is separable. For example, take $K = \mathbb{F}_p(t_1, t_2, \dots)^{\text{sep}}$ (the separable closure of the field $\mathbb{F}_p(t_1,t_2,\dots)$) with the elementary extensions $K^* = \mathbb{F}_p(a,b,t_1,t_2, \dots)^{\text{sep}}$ and $K^{**} = \mathbb{F}_p(a,b,x,y,t_1,t_2, \dots)^{\text{sep}}$, where $a$, $b$ and $x$ are transcendental and $y$ satisfies $ax^p+by^p=1$ (regular extensions of separably closed fields of the same degree of imperfection are elementary by \cite[Theorem 1]{Wood_SepClosedFields}). Then $K^{**}/K^*$ is not separable, but $K^*$ is relatively algebraically closed in $K^{**}$ (see \cite[Remark 2.7.6]{Fried08}). The field $K$ is however not a counterexample to the above question, because it also has pairs of elementary extensions that are not relatively algebraically closed, for example $K^* = \mathbb{F}_p(a, t_1,t_2, \dots)^{\text{sep}}$ and $K^{**} = \mathbb{F}_p(\sqrt[p]{a}, t_1, t_2, \dots)^{\text{sep}}$.
This is despite separably closed fields being model complete in a very natural expansion of the language of rings (see again \cite[Theorem 1]{Wood_SepClosedFields}).

\bibliographystyle{amsalpha}
\bibliography{References}

\end{document}